\newtheorem{thm}{Theorem}[section]
\newtheorem{lem}[thm]{Lemma}
\newtheorem{defn}[thm]{Definition}
\newtheorem{exa}[thm]{Example}
\numberwithin{equation}{section}
\newcommand{\LLL}{L^2(\Omega)}
\newcommand{\va}{\varphi}
\newcommand{\ppp}{\partial}
\newcommand{\R}{\mathbb{R}}
\newcommand{\C}{\mathbb{C}} 
\newcommand{\N}{\mathbb{N}} 
\newcommand{\www}{\widetilde}
\title{Operator theoretic approach to the Caputo derivative and the fractional diffusion equations}
\author{Rudolf Gorenflo\thanks{Department of Mathematics and Informatics, 
Free University of Berlin, 
Arnimallee 3, 14195  Berlin, Germany,
e-mail: {\tt gorenflo@mi.fu-berlin.de}}, 
Yuri Luchko \thanks{Department of Mathematics, Physics, and Chemistry, Beuth University of Applied Sciences,   Luxemburger Str. 10, 
13353 Berlin, Germany,
e-mail: {\tt luchko@beuth-hochschule.de}}
and Masahiro Yamamoto
\thanks{ Department of Mathematical Sciences, The University
of Tokyo, Komaba, Meguro, Tokyo 153, Japan,
e-mail:
{\tt myama@ms.u-tokyo.ac.jp}}
}
\date{}
\begin{document}
\maketitle
\begin{abstract}
The Caputo time-derivative is usually defined pointwise 
for well-behaved functions, say, for continuously differentiable functions. 
Accordingly, 
in the theory of the partial fractional differential equations with the Caputo derivatives, the functional spaces where the solutions are looked for are often the spaces of the smooth functions that are too narrow.  In this paper, 
we introduce a suitable definition of the 
 Caputo derivative in the fractional Sobolev spaces and investigate it from the operator theoretic viewpoint.  In particular, some important equivalences of the norms related to the fractional integration and differentiation operators in the fractional Sobolev spaces are given.  
These results are then applied for proving  the maximal regularity of the 
solutions to some initial-boundary value problems for
the time-fractional diffusion equation with the Caputo derivative in the fractional Sobolev spaces.
\end{abstract}

\vspace{0.1cm}

\noindent
{\sl MSC 2010}: 26A33, 35C05, 35E05, 35L05, 45K05, 60E99

\noindent
{\sl Key Words}: Riemann-Liouville integral, Caputo fractional derivative, fractional Sobolev spaces, norm equivalences, fractional diffusion equation in Sobolev spaces,  norm estimates of the solutions, initial-boundary-value problems, weak solution, existence and uniqueness results

%\baselineskip 18pt

%\noindent
%\bf \S1. 
\section{Introduction}

In a very recent survey paper \cite{Met14} that is devoted to the systems, which exhibit anomalous diffusion, about three hundreds references to the relevant works are given. Many of the cited publications deal with modeling of the anomalous diffusion with the continuous time random walks on the micro-level and with the fractional diffusion equations on the macro-level. The permanently growing number of publications devoted to anomalous diffusion and its modeling with the Fractional Calculus (FC) operators poses some challenges on the mathematical theory of FC and in particular on the theory of the partial differential equations of fractional order. This paper is devoted to one of these challenges, namely, to suggest  a suitable definition of the Caputo fractional derivative in the fractional Sobolev spaces, to consider its properties in these spaces, and to apply them for analysis of the fractional diffusion equations in the fractional Sobolev spaces.

For the theory of the FC operators, we refer the reader to 
the encyclopedia \cite{SKM}. The basic theory of the ordinary and partial fractional differential equations can be found e.g. in the monographs  
\cite{Diet},  \cite{Kil}, and \cite{P}.  We mention here also the papers \cite{Beck}, \cite{LuY}, 
 \cite{Lu1}-\cite{Lu3},  \cite{SY}, where some recent developments regarding the partial fractional differential equations are presented.  

In this paper, we deal with the fractional diffusion equation  
$$
\ppp_t^{\alpha}u(x,t) = -Lu(x,t) + F(x,t),  \ x\in \Omega \subset \R^n,\ 0<t\le T,   \eqno{(1.1)}
$$
where $-L$ is a differential operator of the elliptic type and 
 $\ppp_t^{\alpha}$ denotes the Caputo derivative that is usually 
defined by  the formula
$$
\ppp_t^{\alpha}u(x,t) = \frac{1}{\Gamma(1-\alpha)}\int^t_0
(t-s)^{-\alpha}\frac{\ppp u}{\ppp s}(x,s) ds, \quad 0 < t \le T, 
\quad 0 < \alpha < 1.            \eqno{(1.2)}
$$
To avoid switching between notations we consistently write
the operator symbol for fractional derivation with round $\partial$,
regardless of the number of independent variables.

In the formula (1.2), the Caputo derivative $\ppp_t^{\alpha}u$ is a  
derivative of the order $\alpha, \ 0 < \alpha < 1$. Still, in its definition the first 
derivative $\frac{\ppp u}{\ppp s}$ is involved that requires extra
regularity of the function $u$ and is meaningful only if $\frac{\ppp u}{\ppp s}$ exists in 
a suitable sense.

On the other hand, in many applications one has to deal with the non-differentiable functions and it is important to introduce a  weak solution  to the  fractional diffusion equation 
(1.1) in the case where  
$\frac{\ppp u}{\ppp t}$ does not exist in the usual sense (see e.g. \cite{LM} for the theory of the weak solutions of partial differential equations). 
For partial differential equations, the weak solutions are often constructed  in the Sobolev spaces (\cite{LM}). In this paper, we try to extend the theory of the weak solutions to partial differential equations in the Sobolev spaces 
to the fractional diffusion equation (1.1). 
The first problem which we have to overcome is to 
interpret the fractional Caputo derivative 
$\ppp_t^{\alpha}$ in the fractional Sobolev spaces and not by the pointwise definition
(1.2).  To the best knowledge of the authors, a solution to this problem  was 
not yet suggested in the literature.

It is worth mentioning that there are some publications (see e.g. \cite{Er}, \cite{Bangti} and the references there) devoted to the Riemann-Liouville fractional derivative  in the fractional Sobolev spaces. However, their approach via the Fourier transform is essentially different from the approach which 
we suggest in this paper for defining the Caputo derivative in the fractional Sobolev spaces. 

One of the main applications of the  fractional derivatives in the fractional Sobolev spaces is for introducing the weak or the generalized solution to the fractional differential equations. Of course, like in the theory of partial differential equations, different approaches can be used to attack this problem. In particular, in \cite{Lu2010}, a generalized solution to the initial-boundary-value problems for the  fractional diffusion equation in the sense of Vladimirov was introduced and analyzed. This generalized solution is a continuous function that is not necessarily differentiable. To construct the generalized solution, a formal solution in terms of the Fourier series with respect to the eigenfunctions  of the operator $L$ from (1.1) was employed. 

The same idea of the formal solution was used in \cite{SY} for constructing a 
weak solution to some  initial-boundary-value problems for an equation of the type (1.1) and for  proving  its 
unique existence  for the functions
$F\in L^{\infty}(0,T;L^2(\Omega))$ and with an initial condition of the type 
$\lim_{t\downarrow 0} \Vert u(\cdot,t)\Vert_* = 0$, where  
$\Vert \cdot\Vert_*$ is a certain  norm that is weaker than the $L^2$-norm.  A norm estimate for the week solution was however 
given via the norm of $F$ in $L^2(0,T;L^2(\Omega))$.
Thus the results presented in \cite{SY} show a certain inconsistency 
between the inclusion $F\in L^{\infty}(0,T;L^2(\Omega))$ and the solution
norm estimate via the norm of $F$ in $L^2(0,T;L^2(\Omega))$. 
In this paper, this inconsistency 
is resolved by a new definition of the weak solution that is based on the suggested definition of the Caputo derivative in the fractional Sobolev spaces. 

In this way, the maximum regularity of the fractional diffusion equation with the Caputo fractional derivative is shown in this paper. Let us mention that in \cite{Ba} the $W^{\alpha,p}(0,T)$-
regularity with $p > 1$ was proved for the fractional differential equations with the Riemann-Liouville time-fractional derivative.

The rest of this paper is organized in three sections.  In Section 2, 
the Riemann-Liouville fractional integral and the related Abel integral equations in the fractional Sobolev spaces are revisited. 
The result (Theorem \ref{t21}) of Section 2 forms a basis for investigation of the Caputo fractional derivative in the fractional Sobolev spaces 
in Section 3 where we establish the norm equivalence between 
the $L^2$-norm of $\ppp_t^{\alpha}u$ and the fractional Sobolev norm of
$u$ (Theorem \ref{t31}). In particular, we suggest a new interpretation of the Caputo derivative in the fractional Sobolev spaces and prove some important norm equivalences. Finally,  Section 4 is devoted to investigation of the maximum regularity of the solutions to some initial-boundary-value problems for the fractional diffusion equations with the Caputo time-derivative in the fractional Sobolev spaces. We introduce a notion of a week solution  to the problem under consideration and show both its uniqueness, existence, and the corresponding norm estimates. 

%\vspace{0.4cm}

%\noindent
\section{The Riemann-Liouville integral  in the fractional Sobolev spaces}

In this section, we first remind the reader of some known properties of the Riemann-Liouville fractional integral operator and then formulate and prove one of our main results. We start this section with some definitions of the operators 
and the functional spaces which we need in the further discussions. 
Throughout this paper, we always assume that $0 < \alpha < 1$ if we 
do not specify another condition.

The Riemann-Liouville fractional integral operator $J^{\alpha}: L^2(0,T) \to L^2(0,T)$ is defined by the formula (see e.g. \cite{GV})
$$
(J^{\alpha}y)(t) = \frac{1}{\Gamma(\alpha)}\int^t_0
(t-s)^{\alpha-1}y(s) ds, \quad 0\le t \le T, \quad 0 < \alpha \le 1,
\quad J^0 = I.
$$
By $L^2 := L^2(0,T)$ and $H^{\alpha}(0,T)$ we mean the usual $L^2$-space
and the fractional Sobolev space on the interval $(0,T)$ (see e.g. \cite{Ad}, Chapter 
VII), respectively. The 
$L^2$-norm and the scalar product in $L^2$ are denoted by  
$\Vert\cdot\Vert_{L^2}$ and $(\cdot,\cdot)_{L^2}$, respectively.  By $\sim$ we 
mean a norms equivalence.
We set 
$$
_0 H^{\alpha}(0,T) = \{ u \in H^{\alpha}(0,T): \thinspace
u(0) = 0\}
$$
if $\frac{1}{2} < \alpha \le 1$ and we identify 
$_0H^{\alpha}(0,T)$ with $H^{\alpha}(0,T)$ for $0 \le \alpha < 
\frac{1}{2}$.

For Hilbert spaces $X$ and $Y$ and an operator $K: X \to Y$
defined in $X$, by $\mathcal{D}(K)$ and $\mathcal{R}(K)$ we denote
the domain and the range of $K$, respectively.
It can be easily verified that the Riemann-Liouville operator $J^{\alpha}: L^2 \to L^2$ is injective (Theorem 5.1.1 in \cite{GV}).  
Therefore there exists an inverse operator to the Riemann-Liouville operator $J^{\alpha}$ and we denote it  
by $J^{-\alpha}$.  By the definition, 
$\mathcal{D}(J^{-\alpha}) = \mathcal{R}(J^{\alpha})$. 
To deal with the operator $J^{-\alpha}$ in $\mathcal{D}(J^{-\alpha})$, we thus have to describe the range of the Riemann-Liouville operator $J^{\alpha}$
incorporated with the norm, 
which is given in the following theorem (for $0 \le \alpha < \frac{1}{2}$,
a part of our results is already formulated and proved in Theorem 18.3 
from \cite{SKM}).
%\\
\begin{thm}%{\bf Theorem 2.1}.\\
\label{t21}
$\mbox{ }$
\\
(i) 
\begin{align*}
&\Vert J^{\alpha}u\Vert_{H^{\alpha}(0,T)} \sim \Vert u\Vert_{L^2},
\quad u \in L^2(0,T),\\
&\Vert J^{-\alpha}v\Vert_{L^2} \sim \Vert v\Vert_{H^{\alpha}(0,T)},
\quad v \in \mathcal{R}(J^{\alpha}).
\end{align*}
\\
(ii) 
$$
\mathcal{R}(J^{\alpha}) =
\left\{
\begin{array}{rl}
&H^{\alpha}(0,T), \quad 0 \le \alpha < \frac{1}{2}, \\
&_{0}H^{\alpha}(0,T), \quad  \frac{1}{2} < \alpha \le 1,\\
&\left\{ u \in H^{\frac{1}{2}}(0,T):\thinspace
\int^T_0 t^{-1}\vert u(t)\vert^2 dt < \infty\right\},
\quad \alpha = \frac{1}{2}.\\ 
\end{array}\right.
$$
\end{thm}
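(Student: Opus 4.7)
My plan is to prove (i) and (ii) together, splitting into the three regimes $0\le\alpha<1/2$, $\alpha=1/2$, and $1/2<\alpha\le 1$. The first regime is essentially contained in Theorem 18.3 of \cite{SKM}, as noted just before the statement, so the main effort lies in the other two. Throughout, I would exploit the convolution structure of $J^{\alpha}$, with kernel $k_{\alpha}(t)=t^{\alpha-1}/\Gamma(\alpha)$ on $(0,\infty)$, and its formal left inverse $D^{\alpha}=\frac{d}{dt}J^{1-\alpha}$, for which $D^{\alpha}J^{\alpha}u=u$ whenever $u\in L^2(0,T)$. The second equivalence in (i) will follow immediately from the first by the substitution $v=J^{\alpha}u$, so I really only have to prove the first equivalence plus the range description.

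For the upper bound $\Vert J^{\alpha}u\Vert_{H^{\alpha}(0,T)}\le C\Vert u\Vert_{L^2}$, I would either extend $u$ by zero to $\mathbb{R}_+$ and treat $J^{\alpha}$ as a Fourier multiplier with symbol essentially $(i\xi)^{-\alpha}$ (truncating low frequencies using the bounded domain), or — more adapted to the interval — estimate the Slobodeckij semi-norm directly, splitting the double integral according to whether $|t-s|$ is small or large and invoking a Hardy--Littlewood--Sobolev type bound on the singular part. For the lower bound, the left-inverse identity gives $\Vert u\Vert_{L^2}=\Vert D^{\alpha}J^{\alpha}u\Vert_{L^2}$, so it suffices to show the boundedness $\Vert D^{\alpha}v\Vert_{L^2}\le C\Vert v\Vert_{H^{\alpha}(0,T)}$ for $v\in\mathcal{R}(J^{\alpha})$; this is a statement about $\frac{d}{dt}J^{1-\alpha}$ on $H^{\alpha}$, provable by the same convolution and Slobodeckij techniques.

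For (ii), the forward inclusions follow from elementary pointwise estimates. When $\alpha>1/2$, Cauchy--Schwarz gives $|(J^{\alpha}u)(t)|\le C t^{\alpha-1/2}\Vert u\Vert_{L^2}\to 0$ as $t\downarrow 0$, placing $J^{\alpha}u$ in ${}_0H^{\alpha}(0,T)$. When $\alpha=1/2$, this bound degenerates, but a Hardy-type argument applied to $J^{1/2}u$ still yields the required integrability $\int_0^T t^{-1}|(J^{1/2}u)(t)|^2\,dt<\infty$. The reverse inclusions are constructive: given $v$ in the claimed space, set $u=D^{\alpha}v$; the boundedness established in the previous step implies $u\in L^2$, and the identity $J^{\alpha}D^{\alpha}v=v$ then follows from the vanishing trace or the weighted integrability at $t=0$ through an integration-by-parts argument on $J^{1-\alpha}v$.

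The hard part will be the borderline case $\alpha=1/2$. The standard trace theorem fails at this exponent, so the range cannot be characterized by a boundary condition alone; one must instead isolate a compensating weighted $L^2$ condition and essentially recognize the Lions--Magenes space $H^{1/2}_{00}(0,T)$. Both directions require a sharp Hardy inequality relating $\int_0^T t^{-1}|v(t)|^2\,dt$ to $\Vert v\Vert_{H^{1/2}}^2$ for the appropriate class of $v$, and dovetailing this with the convolution structure of $J^{1/2}$ and the composition $J^{1/2}D^{1/2}$ is the technical heart of the proof. Once this borderline case is settled, the $\alpha>1/2$ range characterization follows by an analogous but simpler argument based on the vanishing trace alone.
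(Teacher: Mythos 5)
Your plan is workable and, I believe, could be carried to completion, but it takes a genuinely different route from the paper. The paper's proof is purely operator-theoretic and avoids all three of your main technical ingredients (Slobodeckij-seminorm estimates, Fourier multipliers, and sharp Hardy inequalities at $\alpha=\tfrac12$): it first shows that $J$ is maximal accretive and that its fractional power in the Balakrishnan sense coincides with $J^{\alpha}$ (Lemmas \ref{l21} and \ref{l22}); it then compares $J^{-1}$ with $A^{\frac12}$, where $A=-d^2/dt^2$ with domain $\{u\in H^2(0,T):u(0)=\tfrac{du}{dt}(T)=0\}$, only at the endpoint $\alpha=1$, where the identity $\Vert J^{-1}v\Vert_{L^2}=\Vert v\Vert_{H^1(0,T)}$ on ${}_0H^1(0,T)$ is elementary; and it transfers this comparison to every $\alpha\in(0,1)$ in one stroke via the Heinz--Kato inequality, obtaining $\mathcal{D}(J^{-\alpha})=\mathcal{D}(A^{\frac{\alpha}{2}})$ with equivalent norms. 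The whole of part (ii), including the borderline space at $\alpha=\tfrac12$, is then read off from Fujiwara's characterization (2.4)--(2.5) of $\mathcal{D}(A^{\frac{\alpha}{2}})$, with no case distinction in the proof itself. Your route re-derives by hand exactly what is hidden inside Heinz--Kato and Fujiwara, which buys self-containedness and explicit kernel estimates at the price of length and of the borderline analysis you rightly identify as the technical heart. One caution if you execute it: the bound $\Vert\tfrac{d}{dt}J^{1-\alpha}v\Vert_{L^2}\le C\Vert v\Vert_{H^{\alpha}(0,T)}$ is false on all of $H^{\alpha}(0,T)$ once $\alpha\ge\tfrac12$ (take $v\equiv1$, for which $\tfrac{d}{dt}J^{1-\alpha}v=t^{-\alpha}/\Gamma(1-\alpha)\notin L^2$), so the vanishing trace, respectively the weighted integrability at $t=0$, must enter the estimate itself rather than only the range description; a pure Slobodeckij computation will not detect this, which is why the Hardy inequality is indispensable there and not merely a convenience.
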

For the proof of Theorem \ref{t21}, some auxiliary statements that were derived 
in \cite{GY} are needed. For the sake of completeness, we give here both the
formulations and the proofs of these results. 

By  $J = J^1$ we denote the integral $(Jy)(t) = \int^t_0 y(s) ds$ for 
$0\le t \le T$ and by $I: L^2 \to L^2$ the identity mapping. 
In this section, we consider the space $L^2(0,T)$ over $\C$ with the 
scalar product $(u,v)_{L^2} = \int^T_0 u(t)\overline{v(t)} dt$, and
$\Re \eta$ and $\Im \eta$ denote the real and the imaginary parts of 
a complex number $\eta$, respectively .

%{\bf Lemma 2.1}.\\
\begin{lem}
\label{l21}
For any $u \in L^2$, the inequality $\Re (Ju,u)_{L^2} \ge 0$ holds true and 
$\mathcal{R}(I+J) = L^2$.
\end{lem}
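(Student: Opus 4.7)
The statement consists of two independent claims and I would prove them in turn. For the positivity, the plan is to introduce $U:=Ju$, which for $u\in L^2(0,T)$ belongs to $H^1(0,T)$ with $U(0)=0$ and $U'=u$ almost everywhere. Then
\[
(Ju,u)_{L^2} \;=\; \int_0^T U(t)\overline{U'(t)}\,dt,
\]
and since $\Re\bigl(U\overline{U'}\bigr)=\frac{1}{2}\frac{d}{dt}|U|^2$, integrating over $(0,T)$ yields
\[
\Re(Ju,u)_{L^2} \;=\; \frac{1}{2}\bigl(|U(T)|^2 - |U(0)|^2\bigr) \;=\; \frac{1}{2}|U(T)|^2 \;\ge\; 0.
\]

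For the surjectivity $\mathcal{R}(I+J)=L^2$, I would solve the Volterra equation $u+Ju=f$ explicitly for an arbitrary $f\in L^2(0,T)$. Setting $v:=Ju\in H^1(0,T)$, so that $v(0)=0$ and $v'=u$, the equation reduces to the first-order linear ODE $v'+v=f$ with $v(0)=0$. The integrating factor $e^{t}$ gives
\[
v(t)\;=\;\int_0^t e^{-(t-s)}f(s)\,ds,
\]
and I would take $u:=f-v$ as the candidate solution. Since convolution against the $L^1(0,T)$ kernel $e^{-t}$ is bounded on $L^2$, we have $u\in L^2(0,T)$; a short direct computation (swapping the order of integration in $\int_0^t v(r)\,dr$) confirms that $Ju = v$ and hence $u+Ju = f$.

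The only technical point I would handle with care, and the one I view as the mildest obstacle, is the justification that $Ju\in H^1(0,T)$ with $(Ju)(0)=0$ and $(Ju)'=u$ whenever $u\in L^2$; this is a standard consequence of $L^2(0,T)\subset L^1(0,T)$ on a bounded interval together with the fundamental theorem of calculus for absolutely continuous functions, but it underlies both parts of the proof. An alternative route to surjectivity, which avoids writing the resolvent explicitly, would be to exploit the compactness of $J$ on $L^2(0,T)$ (its kernel $\chi_{\{s<t\}}$ is bounded on $(0,T)^2$, hence Hilbert--Schmidt): then $I+J$ is Fredholm of index zero, and injectivity follows at once from the positivity just established, since $u+Ju=0$ forces $\Re(Ju,u)_{L^2}=-\Vert u\Vert_{L^2}^2$ and hence $u=0$, after which the Fredholm alternative delivers $\mathcal{R}(I+J)=L^2$.
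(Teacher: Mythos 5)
Your proof is correct and follows essentially the same route as the paper: the positivity claim is the identical integration-by-parts computation with $U=Ju$, and your explicit solution of $v'+v=f$, $v(0)=0$ reproduces exactly the resolvent formula $(I+J)^{-1}f = f - \int_0^t e^{-(t-s)}f(s)\,ds$ that the paper simply quotes as its formula (2.1) with $\lambda=1$. Your alternative Fredholm/compactness argument for surjectivity is also valid, but the main line of reasoning coincides with the paper's.
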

%{\bf Proof.}
%\\
\begin{proof}
First the notations  $\Re Ju(t) = \va(t)$ and $\Im Ju(t) = \psi(t)$ 
are introduced. With these notations, the following chain of equalities and 
inequalities can be easily obtained: 
\begin{align*}
& \Re \thinspace (Ju,u)_{L^2}
= \int^T_0 \left(\int^t_0 u(s)ds\right) \overline{u}(t) dt  
= \int^T_0 Ju(t)\frac{d}{dt}\overline{Ju(t)} dt\\
=& \int^T_0 \left(\va(t)\frac{d\va}{dt} + \psi(t)\frac{d\psi}{dt}
\right) dt
= \frac{1}{2}(\va(t)^2 + \psi(t)^2)\vert^{t=T}_{t=0}
= \frac{1}{2}\vert Ju(T)\vert^2 \ge 0.
\end{align*}
Therefore $\Re (Ju,u)_{L^2} \ge 0$ for $u \in L^2$.
Next we have
$$
(\lambda I+J)^{-1}u(t)
 = \lambda^{-1}u(t) - \lambda^{-2}\int^t_0 e^{-(t-s)/\lambda}u(s) ds, 
\quad 0\le t \le T.                  \eqno{(2.1)}
$$
Setting $\lambda = 1$, by (2.1) the operator $(I+J)^{-1}u$ is defined for all
$u \in L^2$, which implies that $\mathcal{R}(I+J) = L^2$.
The proof is completed.
\end{proof}
%\\

It follows from Lemma \ref{l21} that $J$ is a maximal accretive operator 
(Chapter 2, \S1 in \cite{Ta}) and thus the 
assumption 6.1 in Chapter 2, \S6 of \cite{Pa} holds valid, so that for 
$0 < \alpha < 1$ one can 
define the fractional power $J(\alpha)$ of the integral operator $J$ by the 
formula
$$
J(\alpha)u = \frac{\sin\pi\alpha}{\pi}\int^{\infty}_0
\lambda^{\alpha-1}(\lambda I + J)^{-1}Ju\, d\lambda, \quad 
u \in \mathcal{D}(J) = L^2
$$
(see also Chapter 2, \S3 in \cite{Ta}).
It turns out that the  fractional power $J(\alpha)$ of the integral 
operator $J$ coincides with 
the Riemann-Liouville fractional integral operator on $L^2$ as stated 
and proved below. 
%\\
%{\bf Lemma 2.2.}\\
\begin{lem}
\label{l22}
$$
(J(\alpha)u)(t) = (J^{\alpha}u)(t), \qquad 0\le t \le T,
\quad u \in L^2, \quad 0 < \alpha < 1.
$$
\end{lem}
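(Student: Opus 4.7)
The plan is to compute $J(\alpha)u$ directly from its defining Balakrishnan integral using the explicit resolvent formula (2.1) and then identify the result with the Riemann--Liouville kernel via the Euler reflection formula.

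First, I would rewrite the integrand in a more tractable form. Using the operator identity $(\lambda I+J)^{-1}J = I - \lambda(\lambda I + J)^{-1}$ together with (2.1), one gets
\begin{equation*}
(\lambda I+J)^{-1}Ju(t) \;=\; u(t)-\lambda\,(\lambda I+J)^{-1}u(t) \;=\; \lambda^{-1}\int_0^t e^{-(t-s)/\lambda}u(s)\,ds.
\end{equation*}
Substituting this into the definition of $J(\alpha)u$ yields
\begin{equation*}
(J(\alpha)u)(t) \;=\; \frac{\sin\pi\alpha}{\pi}\int_0^\infty \lambda^{\alpha-2}\!\int_0^t e^{-(t-s)/\lambda}u(s)\,ds\,d\lambda.
\end{equation*}

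Next I would justify an interchange of the two integrals by Fubini's theorem. Fix $u\in L^2$ and $t\in(0,T]$. Near $\lambda=0$, the factor $e^{-(t-s)/\lambda}$ decays faster than any power, absorbing the singularity of $\lambda^{\alpha-2}$; near $\lambda=\infty$, the integrand behaves like $\lambda^{\alpha-2}$, which is integrable because $\alpha-2<-1$. Combined with Cauchy--Schwarz in $s$ (against $u$), this gives absolute integrability on $(0,t)\times(0,\infty)$ and permits swapping the order.

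After swapping, one is left with the inner $\lambda$-integral. Using the substitution $\mu=(t-s)/\lambda$,
\begin{equation*}
\int_0^\infty \lambda^{\alpha-2}e^{-(t-s)/\lambda}\,d\lambda \;=\; (t-s)^{\alpha-1}\int_0^\infty \mu^{-\alpha}e^{-\mu}\,d\mu \;=\; (t-s)^{\alpha-1}\Gamma(1-\alpha).
\end{equation*}
Combining with the Euler reflection formula $\Gamma(\alpha)\Gamma(1-\alpha)=\pi/\sin\pi\alpha$, the prefactor $\tfrac{\sin\pi\alpha}{\pi}\Gamma(1-\alpha)$ collapses to $1/\Gamma(\alpha)$, and we obtain
\begin{equation*}
(J(\alpha)u)(t)=\frac{1}{\Gamma(\alpha)}\int_0^t (t-s)^{\alpha-1}u(s)\,ds=(J^\alpha u)(t),
\end{equation*}
as desired. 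The only genuinely delicate point is the Fubini step, since the kernel $\lambda^{\alpha-2}e^{-(t-s)/\lambda}$ is singular both at $\lambda=0$ and along the diagonal $s=t$; the rest is a direct computation and an appeal to the reflection identity.
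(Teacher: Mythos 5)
Your proposal is correct and follows essentially the same route as the paper: compute $(\lambda I+J)^{-1}Ju$ explicitly from (2.1), exchange the order of integration, evaluate the inner $\lambda$-integral by the substitution $\eta=(t-s)/\lambda$ to produce $\Gamma(1-\alpha)(t-s)^{\alpha-1}$, and conclude with the reflection formula. Your added attention to the Fubini step is a welcome refinement of a point the paper passes over silently.
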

%{\bf Proof.}
\begin{proof}
By (2.1), we have
$$
(\lambda I + J)^{-1}Ju(t) 
= \lambda^{-1}\int^t_0 e^{-(t-s)/\lambda}u(s) ds,
$$
and by the change of the variables $\eta = \frac{t-s}{\lambda}$, we
obtain
\begin{align*}
& \frac{\sin\pi\alpha}{\pi}\int^{\infty}_0 \lambda^{\alpha-1}
(\lambda I + J)^{-1}Ju(t)\, d\lambda
= \frac{\sin\pi\alpha}{\pi}\int^{\infty}_0 \lambda^{\alpha-2}
\left( \int^t_0 e^{-(t-s)/\lambda} u(s)ds \right)d\lambda\\
=& \frac{\sin\pi\alpha}{\pi}\int^t_0 u(s) \left(
\int^{\infty}_0 \lambda^{\alpha-2} e^{-(t-s)/\lambda} d\lambda
\right)ds 
= \frac{\sin\pi\alpha}{\pi}\int^t_0 u(s)
\left( \int^{\infty}_0 \eta^{-\alpha} e^{-\eta} d\eta\right)
(t-s)^{\alpha-1} ds\\
= & \frac{\Gamma(1-\alpha)\sin\pi\alpha}{\pi}
\int^t_0 u(s)(t-s)^{\alpha-1} ds.
\end{align*}
Now the known formula $\Gamma(1-\alpha)\Gamma(\alpha) = \frac{\pi}{\sin\pi\alpha}$
implies the statement of the lemma.
\end{proof}

Next we consider the differential operator 
$$
\left\{
\begin{array}{rl}
&(Au)(t) = -\frac{d^2u(t)}{dt^2}, \quad 0 < t < T,\\
&\mathcal{D}(A) = \{u \in H^2(0,T):\thinspace u(0) = \frac{du}{dt}(T) = 0\}.
\end{array}\right.
\eqno{(2.2)}
$$
Note that the boundary conditions $u(0) = \frac{du}{dt}(T) = 0$ should be 
interpreted as the traces of $u$ in the Sobolev space $H^2(0,T)$ 
(see e.g., \cite{Ad} or \cite{LM}).
It is possible to define the fractional power $A^{\frac{\alpha}{2}}$ of 
the differential operator $A$ for 
$0 \le \alpha \le 1$ in terms of
the eigenvalues and the eigenfunctions of the eigenvalue problem for the 
operator $A$.  More precisely,
let $0 < \lambda_1 < \lambda_2 < \cdots $ be the eigenvalues and 
$\psi_k$, $k \in \N$  the corresponding normed eigenfunctions of $A$.  
It is easy to derive the explicit 
formulas for $\lambda_k,\ \psi_k,\ k \in \N$, namely,  
$\lambda_k = \frac{(2k-1)^2\pi^2}{4T^2}$ and 
$\psi_k(t) = \frac{\sqrt{2}}{\sqrt{T}} \sin \sqrt{\lambda_k}t$.  
In particular, we note that $\psi_k(0) = 0$ and
$\psi_k \in H^2(0,T)$.  It is known that $\{\psi_k\}_{k\in\N}$ is 
an orthonormal basis of $L^2$.  Then the fractional power 
$A^{\frac{\alpha}{2}},\ 0\le \alpha \le 1$  of 
the differential operator $A$ is defined by the relations 
$$
\left\{
\begin{array}{rl}
& A^{\frac{\alpha}{2}}u = \sum_{k=1}^{\infty} \lambda_k^{\frac{\alpha}{2}}
(u,\psi_k)_{L^2}\psi_k, \quad u \in \mathcal{D}(A^{\frac{\alpha}{2}}),\\
& \mathcal{D}(A^{\frac{\alpha}{2}}) 
= \{u \in L^2:\thinspace
\sum_{k=1}^{\infty} \lambda_k^{\alpha}\vert (u,\psi_k)_{L^2}\vert^2
< \infty\}, \\
& \Vert u\Vert_{\mathcal{D}(A^{\frac{\alpha}{2}})} 
= \left( \sum_{k=1}^{\infty} \lambda_k^{\alpha}\vert (u,\psi_k)_{L^2}\vert^2
\right)^{\frac{1}{2}}.
\end{array}\right.
\eqno{(2.3)}
$$
According to \cite{F} (see also Lemma 8 in \cite{GY}), the domain 
$\mathcal{D}(A^{\frac{\alpha}{2}})$ can be described as follows: 
$$
\mathcal{D}(A^{\frac{\alpha}{2}})=
\left\{
\begin{array}{rl}
&H^{\alpha}(0,T), \quad  0\le \alpha < \frac{1}{2}, \\
&_{0}H^{\alpha}(0,T), \quad  \frac{1}{2} < \alpha \le 1,\\
&\left\{ u \in H^{\frac{1}{2}}(0,T):\thinspace
\int^T_0 t^{-1}\vert u(t)\vert^2 dt < \infty\right\}, \quad
\alpha=\frac{1}{2}. \\ 
\end{array}\right.
                                                 \eqno{(2.4)}
$$
The relation (2.4) holds not only algebraically but also topologically, 
that is,
$$
\Vert A^{\frac{\alpha}{2}}u\Vert_{L^2} \sim \Vert u\Vert_{H^{\alpha}(0,T)},
\quad 0\le \alpha \le 1, \thinspace u \in \mathcal{D}(A^{\frac{\alpha}{2}}).
                                             \eqno{(2.5)}
$$  
In particular, the inclusion $\mathcal{D}(A^{\frac{\alpha}{2}}) 
\subset H^{\alpha}(0,T)$ 
holds true. 
We note that in \cite{GY} the case of the operator $A=\frac{d^2}{d\, t^2}$ 
with the domain 
$\left\{u \in H^2(0,1):
\thinspace \frac{du}{dt}(0) = u(1) = 0\right\}$ was considered, 
which is reduced  
to our case by a simple change of the variables.
 
Now we are ready to prove Theorem \ref{t21}.
\begin{proof}
First of all, it can be directly verified that  $\mathcal{D}(J^{-1}) 
= J(L^2) = {_{0}H^1(0,T)}$, $(J^{-1}w)(t) = \frac{dw(t)}{dt}$, and 
$$
\Vert J^{-1}v\Vert_{L^2} = \Vert v\Vert_{H^1(0,T)}, \quad
v \in {_{0}H^1(0,T)}.                            
$$
Therefore by (2.5) we obtain the norm equivalence 
$$
\Vert J^{-1}v\Vert_{L^2} \sim \Vert A^{\frac{1}{2}}v\Vert_{L^2},
\quad v \in {_{0}H^1(0,T)} =\mathcal{D}(J^{-1}) 
= \mathcal{D}(A^{\frac{1}{2}}).                   
$$
Direct calculations show that both $J^{-1}$ and $A^{\frac{1}{2}}$ are 
maximal accretive in $L^2$.  Hence the Heinz-Kato inequality (see e.g.
Theorem 2.3.4 in \cite{Ta}) yields
$$
\Vert J^{-\alpha}v\Vert_{L^2} \sim \Vert A^{\frac{\alpha}{2}}v\Vert_{L^2},
\quad v \in \mathcal{D}(A^{\frac{\alpha}{2}}), \quad
\mathcal{D}(J^{-\alpha}) = \mathcal{D}(A^{\frac{\alpha}{2}}).
$$
By (2.5) the norm equivalence $\Vert J^{-\alpha}v\Vert_{L^2} \sim \Vert v\Vert
_{H^{\alpha}(0,T)}$ holds true for $v \in \mathcal{D}(J^{-\alpha}) 
= \mathcal{R}(J^{\alpha})$.  Next, setting $v = J^{\alpha}u 
\in \mathcal{D}(J^{-\alpha})$ with
any $u \in L^2$, by (2.5) we obtain the following norm equivalences
$$
\Vert u\Vert_{L^2} \sim \Vert A^{\frac{\alpha}{2}}(J^{\alpha}u)
\Vert_{L^2} \sim \Vert J^{\alpha}u\Vert_{H^{\alpha}(0,T)}.
$$
Thus the proof of Theorem \ref{t21} (i) is completed.

Theorem \ref{t21} (ii) follows from the relation (2.4) and the equality $\mathcal{D}(J^{-\alpha}) 
= \mathcal{R}(J^{\alpha})$. 

\end{proof}

%\vspace{0.4cm}

%\noindent
\section{The Caputo derivative in the fractional Sobolev spaces}

\noindent
The main aim of this section is to define the Caputo derivative in the 
fractional Sobolev spaces and to prove the norm equivalence (Theorem \ref{t31}).  
The original definition of the Caputo derivative is by  the formula
$$
\ppp_t^{\alpha}u(x,t) = \frac{1}{\Gamma(1-\alpha)}\int^t_0
(t-s)^{-\alpha}\frac{\ppp u}{\ppp s}(x,s) ds, \quad 0\le t \le T, 
\quad 0 < \alpha < 1.
$$
Thus $\ppp_t^{\alpha}u$ is defined pointwise for $x \in \Omega$ for 
$u \in H^1(0,T)$ and the definition requires some suitable 
conditions on the first order derivative  $\frac{\ppp u}{\ppp s}$.  
On the other hand, since  $\ppp_t^{\alpha}u$ is the $\alpha$-th derivative 
with $0<\alpha <1$, one
can expect a natural interpretation of $\ppp_t^{\alpha}u$ for the functions 
from the fractional Sobolev space
$H^{\alpha}(0,T)$.  Suggesting this interpretation is the main purpose of 
this section.

We start with introducing a linear span $W$  of the  eigenfunctions 
$\psi_k,\ k\in \N$ of the differential operator $A$ that is defined by the 
formula (2.2):
$$
W = \left\{\sum_{k=1}^N a_k\psi_k:\thinspace N \in \N, \thinspace 
a_k \in \R\right\}.
$$
For the functions from $W$, the following result holds true:
%\\
%{\bf Lemma 3.1}.\\
\begin{lem}
\label{l31}
$$
\ppp_t^{\alpha}\va(t) = J^{-\alpha}\va(t), \quad \va \in W, \ \ 
0 < t < T.
$$
\end{lem}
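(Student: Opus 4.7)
The plan is to reduce the claimed identity to the semigroup property of the Riemann--Liouville operators together with injectivity of $J^{\alpha}$. First I would observe that each eigenfunction $\psi_k(t)=\frac{\sqrt{2}}{\sqrt{T}}\sin(\sqrt{\la_k}\,t)$ is smooth on $[0,T]$ and satisfies $\psi_k(0)=0$, hence every $\va\in W$ is in $C^{\infty}[0,T]$ with $\va(0)=0$. In particular the classical pointwise Caputo formula applies and can be rewritten as
\[
\ppp_t^{\alpha}\va(t) \;=\; \frac{1}{\Gamma(1-\alpha)}\int_0^t (t-s)^{-\alpha}\va'(s)\,ds \;=\; (J^{1-\alpha}\va')(t),
\]
where $\va' \in C^{\infty}[0,T]\subset L^2(0,T)$.

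Next I would invoke the semigroup property $J^{\alpha}J^{\beta}=J^{\alpha+\beta}$ on $L^2(0,T)$ for $\alpha,\beta\ge 0$. This is classical, and can also be read off from Lemma 2.2 above: since $J^{\alpha}$ is the fractional power of the maximal accretive operator $J$, fractional powers obey the semigroup law. Applying $J^{\alpha}$ to $\ppp_t^{\alpha}\va$ and using $\va(0)=0$, I obtain
\[
J^{\alpha}(\ppp_t^{\alpha}\va) \;=\; J^{\alpha}J^{1-\alpha}\va' \;=\; J\va' \;=\; \va .
\]
This exhibits $\va$ as an element of $\mathcal{R}(J^{\alpha})=\mathcal{D}(J^{-\alpha})$, and the injectivity of $J^{\alpha}:L^2\to L^2$ recalled before Theorem~\ref{t21} then forces
\[
\ppp_t^{\alpha}\va \;=\; J^{-\alpha}\va, \qquad 0<t<T,
\]
which is the asserted identity.

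I do not anticipate a serious obstacle here: the only nontrivial input is the semigroup identity $J^{\alpha}J^{1-\alpha}=J$, which can in any case be verified directly on the smooth function $\va'$ by Fubini's theorem together with the standard Beta-function integral. All other ingredients, namely the smoothness of $\va$, the vanishing $\va(0)=0$, the formula $\ppp_t^{\alpha}\va=J^{1-\alpha}\va'$, and the injectivity of $J^{\alpha}$, are either immediate from the structure of $W$ or have already been recorded earlier in the paper.
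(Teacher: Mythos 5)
Your proof is correct, but it takes a genuinely different route from the paper's. The paper first relates the Caputo derivative to the Riemann--Liouville derivative $D_t^{\alpha}$ via integration by parts (their formula (3.1), where the extra term $\va(0)t^{-\alpha}/\Gamma(1-\alpha)$ drops out because $\va(0)=0$), then imports two external facts: that $W\subset\mathcal{R}(J^{\alpha})$ (from Theorem~\ref{t21}~(ii)) so that one may write $\va=J^{\alpha}\www\va$, and that $D_t^{\alpha}$ is a left inverse of $J^{\alpha}$ on $L^1(0,T)$ (Theorem 6.1.2 of the Gorenflo--Vessella book), whence $D_t^{\alpha}\va=\www\va=J^{-\alpha}\va$. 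You instead write the Caputo derivative directly as $J^{1-\alpha}\va'$, apply $J^{\alpha}$, and use the semigroup identity $J^{\alpha}J^{1-\alpha}=J$ together with $\va(0)=0$ to get $J^{\alpha}(\ppp_t^{\alpha}\va)=\va$, concluding by injectivity of $J^{\alpha}$. Your argument is more self-contained: it avoids the Riemann--Liouville derivative and the left-inverse theorem altogether, and it establishes the membership $\va\in\mathcal{R}(J^{\alpha})$ as a byproduct (by exhibiting an explicit preimage) rather than assuming it from the range characterization. What the paper's route buys in exchange is the explicit Caputo/Riemann--Liouville comparison (3.1), which it reuses in the discussion following Theorem~\ref{t31}; your only nontrivial input, the semigroup law on $L^2$, is indeed classical and checkable by Fubini and the Beta integral, exactly as you say.
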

\begin{proof}
For any $\va \in W \subset H^2(0,T)$,  the Riemann-Liouville fractional 
derivative is defined by the relation
$$
D_t^{\alpha}\va(t) = \frac{1}{\Gamma(1-\alpha)}\frac{d}{dt}\int^t_0
(t-s)^{-\alpha}y(s) ds, \quad 0\le t \le T.
$$
Then the relation
$$
D_t^{\alpha}\va(t) = \frac{\va(0)t^{-\alpha}}{\Gamma(1-\alpha)}
+ \ppp_t^{\alpha}\va(t), \quad 0<t<T, \quad \va \in W
                                               \eqno{(3.1)}
$$
between the Caputo and the Riemann-Liouville fractional 
derivatives holds true. 
Indeed, integration by parts implies
$$
\int^t_0 (t-s)^{-\alpha}\va(s)ds
= \left[ \va(s)\frac{(t-s)^{1-\alpha}}{1-\alpha}\right]^{s=0}_{s=t}
+ \frac{1}{1-\alpha}\int^t_0 (t-s)^{1-\alpha}\frac{d\va}{ds}(s)ds.
$$
Applying the differentiation operator $\frac{d}{dt}$ to both sides of 
the last formula and noting the inclusion $\va \in H^2(0,T)$, we arrive at 
the formula (3.1).

Since $\va \in W \subset \mathcal{D}(J^{-\alpha})
= \mathcal{R}(J^{\alpha})$, by Theorem \ref{t21} (ii) there exists
$\www\va \in L^2$ such that $\va = J^{\alpha}\www\va$.  On the other hand,
the Riemann-Liouville fractional derivative is the 
left inverse operator to the Riemann-Liouville fractional integral and 
we have 
$$
D_t^{\alpha}J^{\alpha}\www\va = \www\va         \eqno{(3.2)}
$$
for $\www\va \in L^1(0,T)$ (see e.g. Theorem 6.1.2 in 
\cite{GV}).  Hence
$D_t^{\alpha}\va = \www\va$.  For $\va \in W$, the condition $\va(0) = 0$ 
is fulfilled.  Thus (3.1) yields 
$$
D_t^{\alpha}\va(t) = \frac{\va(0)t^{-\alpha}}{\Gamma(1-\alpha)}
+ \ppp_t^{\alpha}\va(t) = \ppp_t^{\alpha}\va, \quad \va\in W.
$$
Therefore $\ppp_t^{\alpha}\va = \www\va 
= J^{-\alpha}\va$.  Thus the proof of the lemma is completed.
\end{proof}

Lemma \ref{l31} still provides just a pointwise definition of the Caputo fractional 
derivative $\ppp_t^{\alpha}\va$
for a function $\va \in W$.  Now let us consider the closure of the operator 
$\ppp_t^{\alpha}$ in $W$ 
to define it over the whole space $\mathcal{R}(J^{\alpha})$.
For $\va \in W$, the inequality
$$
\Vert \ppp_t^{\alpha}\va\Vert_{L^2} 
= \Vert J^{-\alpha}\va\Vert_{L^2}
\le C\Vert \va\Vert_{H^{\alpha}(0,T)}
$$ 
is guaranteed by Theorem \ref{t21}.  Therefore the linear operator
$\va \mapsto \ppp_t^{\alpha}\va$ is bounded from $W \subset 
\mathcal{D}(A^{\frac{\alpha}{2}})$ to 
$L^2$.  Since $W$ is dense in $\mathcal{R}(J^{\alpha}) = 
\mathcal{D}(A^{\frac{\alpha}{2}})$, 
the operator $\ppp_t^{\alpha}$ can be uniquely extended from $W$ to 
the domain $\mathcal{R}(J^{\alpha})$.  
This extended operator is defined on the whole space $\mathcal{R}(J^{\alpha})$ 
and is bounded from $\mathcal{R}(J^{\alpha})$ to $L^2$.  
For the extension  of $\ppp_t^{\alpha}$, the same notation as for the 
pointwise Caputo fractional derivative will be used in the rest of the paper.
In other words, let $u \in \mathcal{R}(J^{\alpha}) \subset H^{\alpha}(0,T)$ 
and a sequence $\va_n \in W$ converge to $u$ in $\mathcal{R}(J^{\alpha})$.  
Then $\ppp_t^{\alpha}u$ will be interpreted as follows: 
$$
\ppp_t^{\alpha}u(t) = \lim_{n\to\infty}
\left( \frac{1}{\Gamma(1-\alpha)}\int^t_0
(t-s)^{-\alpha}\frac{d \va_n}{ds}(s) ds\right) \quad \mbox{in $L^2$}.   
                                               \eqno{(3.3)}
$$ 
Let us note that this definition is correct, i.e., 
it is independent of the choice of the sequence  $\va_n$.
Indeed, for a function $u \in \mathcal{R}(J^{\alpha})$ 
let $\{\va_n\}_{n\in \N}$ and 
$\{\www \va_n\}_{n\in \N} $ be two sequences that approximate the function 
$u$: $\va_n \to u$ and $\www\va_n \to u$ in $\mathcal{R}(J^{\alpha})$ as 
$n \to \infty$.  Then $\lim_{n\to\infty}
\Vert \va_n - \www\va_n\Vert_{H^{\alpha}(0,T)}
= 0$ and the boundedness of the operator $\ppp_t^{\alpha}: W \subset
H^{\alpha}(0,T) \to L^2$ yields 
$\lim_{n\to\infty} \Vert\ppp_t^{\alpha}
(\va_n - \www\va_n)\Vert_{L^2} = 0$, that is,
$\lim_{n\to\infty} \ppp_t^{\alpha}\va_n
= \lim_{n\to\infty} \ppp_t^{\alpha}\www\va_n$ in $L^2$, 
what we wanted to show. 

In what follows, the Caputo fractional derivative 
$\ppp_t^{\alpha}u$ for $u \in \mathcal{R}(J^{\alpha})$ will be defined by 
(3.3) and no more pointwise.

Let us now derive some properties of the Caputo fractional derivative 
$\ppp_t^{\alpha}u$ defined by (3.3).
\begin{thm}
\label{t31}
%{\bf Theorem 3.1.}
$\mbox{ }$
\\
(i) $\ppp_t^{\alpha}u = J^{-\alpha}u$ in $L^2$ for $u 
\in \mathcal{R}(J^{\alpha})$.
\\
(ii) $\Vert \ppp_t^{\alpha}u\Vert_{L^2} \sim \Vert u\Vert_{H^{\alpha}(0,T)}$
for $u \in \mathcal{R}(J^{\alpha})$.
\end{thm}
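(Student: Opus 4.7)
The plan is to use the density of $W$ in $\mathcal{R}(J^{\alpha})$ together with Lemma \ref{l31} and the continuity of $J^{-\alpha}$ provided by Theorem \ref{t21}, and then to deduce (ii) as an immediate consequence of (i) combined with the second norm equivalence in Theorem \ref{t21}(i).

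For part (i), I would fix $u \in \mathcal{R}(J^{\alpha})$ and choose a sequence $\va_n \in W$ with $\va_n \to u$ in $\mathcal{R}(J^{\alpha})$. Such a sequence exists because $\{\psi_k\}_{k\in\N}$ is an orthonormal basis of $L^2$, hence $W$ is dense in $\mathcal{D}(A^{\frac{\alpha}{2}})$ in its graph norm; by the topological identification in (2.5) and Theorem \ref{t21}(ii) this norm is equivalent to the $H^{\alpha}(0,T)$-norm, and further (by the second equivalence of Theorem \ref{t21}(i)) to the norm $\|J^{-\alpha}\cdot\|_{L^2}$ that induces the topology of $\mathcal{R}(J^{\alpha})$. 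Lemma \ref{l31} applies pointwise to each $\va_n$, giving $\ppp_t^{\alpha}\va_n = J^{-\alpha}\va_n$ in $L^2$. Now I pass to the limit in $L^2$. The left-hand side tends to $\ppp_t^{\alpha}u$ by the very definition (3.3) of the extended Caputo derivative. For the right-hand side, Theorem \ref{t21}(i) yields
\[
\Vert J^{-\alpha}\va_n - J^{-\alpha}u\Vert_{L^2} \le C \Vert \va_n - u\Vert_{H^{\alpha}(0,T)} \longrightarrow 0,
\]
so $J^{-\alpha}\va_n \to J^{-\alpha}u$ in $L^2$. Uniqueness of the $L^2$-limit then gives $\ppp_t^{\alpha}u = J^{-\alpha}u$.

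Part (ii) is then immediate: substituting the identity just obtained and applying the second norm equivalence in Theorem \ref{t21}(i) yields
\[
\Vert \ppp_t^{\alpha}u\Vert_{L^2} = \Vert J^{-\alpha}u\Vert_{L^2} \sim \Vert u\Vert_{H^{\alpha}(0,T)}, \qquad u \in \mathcal{R}(J^{\alpha}),
\]
which is exactly the claim.

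There is essentially no real obstacle here: all the analytic work has already been carried out in Section 2 (via Heinz-Kato and Fujiwara's description of $\mathcal{D}(A^{\alpha/2})$) and in Lemma \ref{l31} (via the pointwise identity $D_t^{\alpha} = \ppp_t^{\alpha}$ on $W$ combined with $D_t^{\alpha}J^{\alpha} = I$). The one point requiring a bit of care is that the approximating sequence must be chosen in the correct topology so that both the left-hand and right-hand limits can be taken in $L^2$ simultaneously; this is ensured by the chain of topological equivalences among $\mathcal{R}(J^{\alpha})$, $\mathcal{D}(A^{\alpha/2})$, and the appropriate (fractional) Sobolev space supplied by Theorem \ref{t21} and the relation (2.5).
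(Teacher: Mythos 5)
Your proposal is correct and follows essentially the same route as the paper: the paper's own proof simply states that part (i) ``follows directly from the definition (3.3)'' and deduces (ii) from (i) together with Theorem \ref{t21}, which is exactly the argument you spell out (approximation by $\va_n\in W$, Lemma \ref{l31} on $W$, passage to the limit using the boundedness of $J^{-\alpha}$ from Theorem \ref{t21}(i)). You have merely made explicit the details the paper leaves implicit; no changes are needed.
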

\begin{proof}
The part (i) of the theorem follows directly from the definition 
(3.3).  As to the part (ii), 
by Theorems \ref{t31} (i) and \ref{t21}, for $u \in \mathcal{R}(J^{\alpha})$ we have the 
norm equivalence 
$$
\Vert \ppp_t^{\alpha}u\Vert_{L^2} 
= \Vert J^{-\alpha}u\Vert_{L^2} \sim \Vert u\Vert_{H^{\alpha}(0,T)},
$$
which completes the proof.
\end{proof}
%\vspace{0.2cm}
%\\
Before we start with analysis of the fractional diffusion equation in the 
fractional Sobolev spaces, let us mention that by  Theorem \ref{t31} (i), 
the solution $u$ to the equation 
$$
\ppp_t^{\alpha}u = f, \quad f\in L^2
$$
is given by $u = J^{-\alpha}f$ and 
$$
u \in 
\left\{
\begin{array}{rl}
&H^{\alpha}(0,T), \quad 0 \le \alpha < \frac{1}{2}, \\
&_{0}H^{\alpha}(0,T), \quad  \frac{1}{2} < \alpha \le 1,\\
&\left\{ u \in H^{\frac{1}{2}}(0,T):\thinspace
\int^T_0 t^{-1}\vert u(t)\vert^2 dt < \infty\right\},
\quad \alpha = \frac{1}{2}.\\ 
\end{array}\right.
$$
This result is well-known 
(see e.g. \cite{LuG}).  Our Theorem \ref{t31} asserts not only 
this formula but also a characterization of the range 
$\mathcal{R}(J^{\alpha})$ in the framework of the extended definition (3.3) of 
the fractional Caputo derivative.

Since $u \in \mathcal{R}(J^{\alpha})$ 
implies $u(0) = 0$ for $\frac{1}{2} < \alpha < 1$, the Caputo and the 
Riemann-Liouville fractional derivatives coincide in $L^2$, that is,
the formula $\ppp_t^{\alpha}u = D_t^{\alpha}u$ holds true, which  
corresponds to the relation (3.1) for a wider class of the functions 
compared to $W$.  
For $0 < \alpha < \frac{1}{2}$, we have $\mathcal{R}(J^{\alpha})
= H^{\alpha}(0,T)$, and with our extended definition of $\ppp_t^{\alpha}$ 
given by (3.3) and a suitably extended definition of $D_t^{\alpha}$ in 
$H^{\alpha}(0,T)$, the relation 
$\ppp_t^{\alpha}u = D_t^{\alpha}u$ is true for $u \in  
\mathcal{R}(J^{\alpha}) = H^{\alpha}(0,T)$, too.  
We note that for $0<\alpha < \frac{1}{2}$, the pointwise definition of 
the Caputo fractional derivative $\ppp_t^{\alpha}u$ involves the first order 
derivative $\frac{du}{dt}$ and thus it does not make any sense for the functions 
$u \in H^{\alpha}(0,T)$.

%\vspace{0.4cm}

%\noindent
\section{Maximal regularity of solutions to the fractional diffusion equation 
with the Caputo derivative}

Let $\Omega \subset \R^n$ be a bounded domain with the smooth boundary 
$\ppp\Omega$, and let $(u,v)_{L^2(\Omega)}$ be the scalar product in 
$L^2(\Omega)$, that is,
$(u,v)_{L^2(\Omega)} = \int_{\Omega} u(x)v(x) dx$.

In the first part of this section, we deal with the following 
initial-boundary-value problem for the fractional diffusion equation with the 
Caputo time-fractional derivative:
$$
\left\{
\begin{array}{rl}
&\ppp_t^{\alpha}u(x,t) = -Lu(x,t) +  F(x,t), \quad x\in \Omega, 
\quad 0 < t < T,\\
&u(x,0) = 0, \quad x \in \Omega,\\
&u\vert_{\ppp\Omega\times (0,T)} = 0,
\end{array}\right. 
\eqno{(4.1)}
$$
where $L$ is a symmetric uniformly elliptic operator in the form 
$$
Lu(x,t) = -\sum_{j,k=1}^n \frac{\partial }{\partial x_j}\left( 
a_{jk}(x)\frac{\partial }{\partial x_k} u(x,t)\right) - c(x)u(x,t)
$$
and the conditions 
$$
\left\{
\begin{array}{rl}
&a_{jk} = a_{kj} \in C^1(\overline{\Omega}), \quad j,k=1,..., n,\\
&\mbox{there exists a constant } \nu_0 > 0 \mbox{ such that}\\ 
&\sum^n_{j,k=1} a_{jk}(x)\xi_j\xi_k \ge \nu_0\sum_{j=1}^n \xi_i^2,
\quad x\in \overline{\Omega}, \thinspace \xi_1, ...., \xi_n \in \R 
\end{array}\right.
\eqno{(4.2)}
$$
are fulfilled. 
Moreover, we assume that $c(x) \le 0,\ x \in C(\overline{\Omega})$ 
(in the second part of the section this condition will be removed). 
As to the operator $L$, our method will be applied  for a more general 
elliptic operator in the second part of this section, 
but first we restrict ourselves to a self-adjoint and positive-definite 
operator for the sake of simplicity.

In this paper, we are interested in a weak solution to the problem (4.1) 
that is defined on the basis of Theorem \ref{t31}.
\begin{defn}[Definition of a weak solution]
Let $F \in L^2(0,T; L^2(\Omega))$.  We call $u$ a weak solution to the problem 
(4.1) if $u \in L^2(0,T;H^2(\Omega) \cap H^1_0(\Omega))$ and the following 
conditions are satisfied 
\\
(i) 
$$ 
J^{-\alpha}u \in L^2(0,T; L^2(\Omega)),
$$
(ii) 
$$
\ppp_t^{\alpha}u(x,t) = -Lu(x,t) + F(x,t) \quad \mbox{in } 
L^2(0,T;L^2(\Omega)).     
                                                        \eqno{(4.3)}
$$
\end{defn}
We note that the inclusion $J^{-\alpha}u \in L^2(0,T;L^2(\Omega))$ implies 
$u(x,\cdot) \in \mathcal{R}(J^{\alpha})$ for almost all $x \in \Omega$.
Hence, in the case $\frac{1}{2} < \alpha < 1$ it follows from Theorem \ref{t21} (ii) 
that
$u(x,\cdot) \in {_{0}H^{\alpha}(0,T)}$ and so $u(x,0) = 0$ for almost all
$x \in \Omega$.  In the case $\alpha = \frac{1}{2}$, the condition
$\int^T_0 \frac{\vert u(x,t)\vert^2}{t} dt < \infty$ holds true for 
$u(x,\cdot) \in \mathcal{R}(J^{\alpha})$.  This condition implicitly describes 
the behavior of the function $u$ in a small neighborhood of the point $t=0$, 
but one cannot conclude from this condition that $u(x,0) = 0$.  
In the case $0 < \alpha < \frac{1}{2}$, 
the initial condition $u(x,0) = 0$ of the problem (4.1) is not meaningful 
at all, because a function $\eta \in H^{\alpha}(0,T)$ has no
trace at $t=0$ if $0 < \alpha < \frac{1}{2}$ in general.
Thus an initial condition of the problem (4.1) has to be posed  depending on 
the trace of the functions from $H^{\alpha}(0,T)$ at the point $t=0$. 
To illustrate the remarks given above, let us consider a simple example. 
\begin{exa}
In the problem (4.1), we set  $n=1$, $\Omega = (0,1)$, 
$0 < \alpha < \frac{1}{2}$, and  
$$
F(x,t) = \frac{-2\Gamma\left(\delta + \frac{1}{2}\right)}
{\Gamma\left(\alpha+\delta+\frac{1}{2}\right)}t^{\alpha+\delta-\frac{1}{2}}
+ x(x-1)t^{\delta-\frac{1}{2}},
$$
where $\delta > 0$ and $\alpha + \delta - \frac{1}{2} < 0$.
Then the inclusion $F \in L^2(0,T;L^2(0,1))$ holds true and we can 
directly check that the function 
$$
u(x,t) = \frac{\Gamma\left(\delta + \frac{1}{2}\right)}
{\Gamma\left(\alpha+\delta+\frac{1}{2}\right)}x(x-1)
t^{\alpha+\delta-\frac{1}{2}}
$$
satisfies the fractional diffusion equation 
$$ 
\ppp_t^{\alpha}u(x,t) = \ppp_x^2 u(x,t) + F(x,t),\ 0 < x < 1,\ t > 0.
$$
Moreover, $u \in L^2(0,T;H^2(0,1) \cap H^1_0(0,1))$ and 
$\ppp_t^{\alpha}u = x(x-1)t^{\delta-\frac{1}{2}}
\in L^2(0,T; L^2(0,1))$  
and thus $u$ is a week solution to the problem (4.1). 
However, since $\alpha + \delta-\frac{1}{2} < 0$, the value of 
$u(x,0)$ in the sense of  $\lim_{t\to 0} \Vert u(\cdot,t)\Vert_{L^2(0,1)}$ 
does not exist.  
Thus we see that there exists a solution to the problem (4.1) that does not 
admit the initial condition. 
\end{exa}

Now we state and prove our main result regarding the weak solution to the 
problem (4.1).
%{\bf Theorem 4.1.}\\
\begin{thm}
\label{t41}
Let $F \in L^2(0,T;L^2(\Omega))$ and $0 < \alpha < 1$.
Then there exists a unique weak solution to the problem (4.1).
Moreover there exists a constant $C>0$ such that the norm estimate 
$$
\Vert u\Vert_{H^{\alpha}(0,T;L^2(\Omega))}
+ \Vert u\Vert_{L^2(0,T;H^2(\Omega))} \le
C\Vert F\Vert_{L^2(0,T;L^2(\Omega))}    \eqno{(4.4)}
$$
holds true for all $F \in L^2(0,T;L^2(\Omega))$.
\end{thm}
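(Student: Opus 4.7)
The plan is to reduce the PDE~(4.1) to a decoupled family of fractional ODEs via spectral expansion in $L$, solve each by means of Theorems~\ref{t21} and~\ref{t31}, and then assemble the solution using a $k$-uniform regularity estimate. Let $\{\varphi_k\}_{k\in\N}$ be an $L^2(\Omega)$-orthonormal basis of eigenfunctions of the Dirichlet realization of $L$ with eigenvalues $0<\mu_1\le\mu_2\le\cdots\to\infty$ (all positive by~(4.2) and the sign assumption on $c$), and expand $F(x,t)=\sum_k F_k(t)\varphi_k(x)$, so that $\sum_k\|F_k\|_{L^2(0,T)}^{2}=\|F\|_{L^2(0,T;L^2(\Omega))}^{2}$. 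I would look for $u(x,t)=\sum_k u_k(t)\varphi_k(x)$ whose coefficients $u_k\in\mathcal{R}(J^\alpha)$ satisfy
\[
\ppp_t^{\alpha}u_k(t)+\mu_k u_k(t)=F_k(t)\quad\text{in }L^2(0,T).
\]
Using Theorem~\ref{t31}~(i) to rewrite $\ppp_t^\alpha u_k=J^{-\alpha}u_k$ and applying $J^\alpha$, this becomes the Volterra equation $u_k+\mu_k J^\alpha u_k=J^\alpha F_k$ in $L^2(0,T)$. Since $J^\alpha$ is compact with spectral radius zero, $I+\mu_k J^\alpha$ is boundedly invertible for every $\mu_k\ge 0$ and yields a unique $u_k\in L^2(0,T)$, which then lies in $\mathcal{R}(J^\alpha)$ via the formula $u_k=J^\alpha(F_k-\mu_k u_k)$.

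The heart of the argument is a maximal regularity estimate uniform in~$k$. Taking the $L^2(0,T)$ inner product of the scalar equation with $\ppp_t^{\alpha}u_k$ yields
\[
\|\ppp_t^{\alpha}u_k\|_{L^2}^{2}+\mu_k(u_k,\ppp_t^{\alpha}u_k)_{L^2}=(F_k,\ppp_t^{\alpha}u_k)_{L^2}.
\]
Writing $w_k:=\ppp_t^\alpha u_k$ and $u_k=J^\alpha w_k$ via Theorem~\ref{t31}~(i), the cross term becomes $(J^\alpha w_k,w_k)_{L^2}\ge 0$, since $J^\alpha$ is m-accretive on $L^2(0,T)$ as a fractional power of the m-accretive operator $J$ from Lemma~\ref{l21} (in the framework invoked before Lemma~\ref{l22}). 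Cauchy--Schwarz then gives the $k$-independent bound $\|\ppp_t^{\alpha}u_k\|_{L^2}\le\|F_k\|_{L^2}$, and rearranging the equation produces $\mu_k\|u_k\|_{L^2}\le 2\|F_k\|_{L^2}$.

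Summing in $k$ via Parseval gives $\|\ppp_t^{\alpha}u\|_{L^2(0,T;L^2(\Omega))}^{2}\le\|F\|_{L^2(0,T;L^2(\Omega))}^{2}$ and $\sum_k\mu_k^{2}\|u_k\|_{L^2}^{2}\le 4\|F\|^{2}$. Applying Theorem~\ref{t31}~(ii) pointwise in $x$ and integrating over $\Omega$ converts the first estimate into $\|u\|_{H^\alpha(0,T;L^2(\Omega))}\le C\|F\|$, while the second, combined with the standard elliptic bound $\|v\|_{H^2(\Omega)}\le C\|Lv\|_{L^2(\Omega)}$ on $H^2(\Omega)\cap H^1_0(\Omega)$, yields the $L^2(0,T;H^2(\Omega))$ half of~(4.4). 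Uniqueness follows term-wise from the unique solvability of each Volterra equation, and verifying Definition~4.1 for the series $u$ is obtained by passing partial sums to the limit using the same uniform bounds. I expect the main obstacle to be establishing the positivity $(u_k,\ppp_t^{\alpha}u_k)_{L^2(0,T)}\ge 0$ with a constant independent of $k$: this fractional analogue of the computation in Lemma~\ref{l21} is what decouples the estimate from $\mu_k$, and without a genuinely $k$-independent constant there, the series for $Lu$ cannot be controlled by $\|F\|$ and the $L^2(0,T;H^2(\Omega))$ half of~(4.4) would collapse.
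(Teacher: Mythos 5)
Your proposal is correct, and on the decisive point it takes a genuinely different route from the paper. Both arguments diagonalize $L$ and reduce (4.1) to the scalar problems $\ppp_t^{\alpha}u_k+\mu_k u_k=F_k$, but the paper then writes down the explicit solution $p_k(t)=\int_0^t F_k(s)(t-s)^{\alpha-1}E_{\alpha,\alpha}(-\mu_k(t-s)^{\alpha})\,ds$ and obtains the $k$-uniform bound $\sum_k\mu_k^2\Vert p_k\Vert_{L^2(0,T)}^2\le C\Vert F\Vert^2$ from the decay of the Mittag--Leffler kernel, citing the computations of Sakamoto--Yamamoto (Theorem 2.2 of \cite{SY}); uniqueness is obtained from the generalized Gronwall inequality applied to $w_k=-\mu_kJ^{\alpha}w_k$. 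You instead avoid Mittag--Leffler functions altogether: existence of each $u_k$ comes from the bounded invertibility of $I+\mu_kJ^{\alpha}$ (quasi-nilpotency of the Volterra operator, which also gives uniqueness in place of Gronwall), and the $k$-uniform maximal regularity comes from the energy identity $\Vert\ppp_t^{\alpha}u_k\Vert^2+\mu_k(J^{\alpha}w_k,w_k)=(F_k,\ppp_t^{\alpha}u_k)$ together with the accretivity of $J^{\alpha}$. The nonnegativity $(J^{\alpha}w,w)_{L^2}\ge 0$ that you flag as the main obstacle does hold with a $k$-independent (indeed zero) constant: it follows from Kato's theorem that fractional powers of the m-accretive operator $J$ of Lemma \ref{l21} are again m-accretive (exactly the framework the paper sets up before Lemma \ref{l22}), or directly from the positive definiteness of the kernel $\vert t-s\vert^{\alpha-1}$. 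Your route buys explicit constants ($\Vert\ppp_t^{\alpha}u_k\Vert\le\Vert F_k\Vert$, $\mu_k\Vert u_k\Vert\le 2\Vert F_k\Vert$) and self-containedness, at the price of invoking accretivity of fractional powers; the paper's route buys an explicit solution formula, which it reuses as the kernel $K(t)$ in the proof of Theorem \ref{t42} for non-self-adjoint $L$ — a step your energy method would not directly supply. The remaining assembly (Parseval in $k$, the component-wise identification of the $H^{\alpha}(0,T;L^2(\Omega))$ norm via Theorem \ref{t31}(ii), elliptic regularity for the $H^2$ half, and passage to the limit of partial sums to verify Definition 4.1) is the same in both arguments.
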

\begin{proof}
The operator $L$ defined by 
$$
(L v)(x) = -\sum_{j,k=1}^n \frac{\partial }{\partial x_j}\left( 
a_{jk}(x)\frac{\partial }{\partial x_k} v(x)\right) - c(x)v(x), 
\quad v \in \mathcal{D}(L) := H^2(\Omega) \cap H^1_0(\Omega)
                                     \eqno{(4.5)}
$$
is a  positive-definite and self-adjoint operator in $L^2(\Omega)$.  
Let $0 < \mu_1 \le \mu_2 \le \cdots  $ be all the 
eigenvalues of $L$, where $\mu_k$ appears in the sequence as often 
as its multiplicity requires.  Let $\va_k, k\in \N$ be the eigenfunction of
$L$ corresponding to the eigenvalue $\mu_k$.  It is known that $\mu_k 
\to \infty$ as $k \to \infty$ and the eigenfunctions $\va_k$ can be chosen to 
be orthonormal, i.e., 
$(\va_j,\va_k)_{L^2(\Omega)} = 1$ if $j=k$ and $(\va_j,\va_k)
_{L^2(\Omega)} = 0$ if $j \ne k$.
These eigenfunctions $\{\va_k\}_{k\in \N}$ build an orthonormal basis
of $L^2(\Omega)$.
\\
{\bf (i) Uniqueness of the weak solution.}
Let $w$ be a weak solution to (4.1) with $F=0$.  Since
$\ppp_t^{\alpha}w$, $Lw \in L^2(0,T;L^2(\Omega))$ and 
$L\va_k = \mu_k\va_k$, the functions $w_k(t) := (w(\cdot,t), \va_k)
_{L^2(\Omega)},\ k \in \N$ belong to the space $\mathcal{R}(J^{\alpha})$ and 
satisfy the relation $\ppp_t^{\alpha}w_k(t) = -\mu_kw_k(t)$. 
Therefore by Theorem \ref{t31} (i), we have
$J^{-\alpha}w_k = -\mu_kw_k$ in $L^2$, that is, $w_k = -\mu_kJ^{\alpha}w_k$
in $L^2$:
$$
w_k(t) = \frac{-\mu_k}{\Gamma(\alpha)}\int^t_0 (t-s)^{\alpha-1}
w_k(s) ds, \quad 0 < t < T.
$$
Hence
$$
\vert w_k(t)\vert \le C\int^t_0 (t-s)^{\alpha-1}\vert w_k(s)\vert ds, 
\quad 0 < t < T.               \eqno{(4.6)}
$$
The generalized Gronwall inequality (see e.g., Theorem 7.1.2  in 
 \cite{H}) yields then the relations $w_k(t) = 0,\ k \in \N$ for $0 < t < T$.
Since $\{\va_k\}_{k\in \N}$ is an orthonormal basis of $L^2(\Omega)$, 
we obtain the relation
$w(\cdot,t) =0$, $0 < t < T$ and thus 
the uniqueness of the weak solution is proved.
\\
{\bf (ii) Existence of the weak solution.}
Our construction of the weak solution follows the lines of the one presented 
in Theorem 2.2 of \cite{SY}.
Let us introduce the sequence $u_k(t) := (u(\cdot,t),\va_k)_{L^2(\Omega)}, 
k \in \N$ and construct a candidate for the weak solution in the form
$$
\widetilde{u}(x,t) = \sum_{k=1}^{\infty} p_k(t)\va_k(x), \quad 
x\in \Omega, \thinspace 0 < t < T     \eqno{(4.7)}
$$
with the functions $p_k$ given by the formula
$$
p_k(t) = \int^t_0 (F(\cdot,s),\va_k)_{L^2(\Omega)}
(t-s)^{\alpha-1}E_{\alpha,\alpha}(-\mu_k(t-s)^{\alpha}) ds.
$$
Here and henceforth $E_{\alpha,\beta}$ denotes 
the two-parameters Mittag-Leffler function defined by the series
$$
E_{\alpha,\beta}(z) = \sum_{k=0}^{\infty} \frac{z^k}{\Gamma(\alpha\,k+\beta)},
\quad z \in \C,\ \alpha >0.
$$
The function $E_{\alpha,\alpha}(z)$ is known to be an entire 
function.  Applying the same technique as the one used  in the proof of 
Theorem 2.2 in 
\cite{SY}, we can show that the series in (4.7) is convergent in 
$L^2(0,T;H^2(\Omega) \cap H^1_0(\Omega))$ and the norm estimate 
$$
\Vert \widetilde{u}\Vert_{L^2(0,T;H^2(\Omega))} \le 
C\Vert F\Vert_{L^2(0,T;L^2(\Omega))}           \eqno{(4.8)}
$$
as well as the relation
$$
\ppp_t^{\alpha}p_k(t) = -\mu_kp_k(t) 
+ (F(\cdot,t),\va_k)_{L^2(\Omega)}, \quad 0<t < T, \thinspace k\in \N
$$
hold true. 
Setting $\widetilde{u}_N(x,t) = \sum_{k=1}^N p_k(t)\va_k(x)$, 
we have the formula
\begin{align*}
&\ppp_t^{\alpha}\widetilde{u}_N = \sum^N_{k=1} p_k(t)(-\mu_k\va_k)
+ \sum^N_{k=1} (F(\cdot,t),\va_k)_{L^2(\Omega)}\va_k\\
=& -L\widetilde{u}_N + \sum_{k=1}^N (F(\cdot,t),\va_k)_{L^2(\Omega)}\va_k.
\end{align*}
Since $L\widetilde{u}_N \to L\widetilde{u}$ in $L^2(\Omega)$ as
$N \to \infty$ with a function $\widetilde{u} \in L^2(0,T;H^2(\Omega))$, 
we can derive the relation
$\lim_{N\to\infty} \ppp_t^{\alpha}\widetilde{u}_N = -L\widetilde{u}
+ F$ in $L^2(\Omega)$.  Therefore, $\ppp_t^{\alpha}\widetilde{u}
= -L\widetilde{u} + F$ and $\widetilde{u}(x,\cdot) \in \mathcal{R}
(J^{\alpha})$ for almost all $x \in \Omega$.  Thus $\widetilde{u}$ is the
weak solution and so $u = \widetilde{u}$.  
Since $-L$ is an elliptic operator of the second order, the norm estimate 
$\Vert v\Vert_{H^2(\Omega)} \le C\Vert Lv\Vert_{\LLL}$ is valid for a function 
$v \in H^2(\Omega) \cap H^1_0(\Omega)$ (see e.g. \cite{LM}).
Hence (4.8) yields the inequality 
$$
\Vert u\Vert_{L^2(0,T;H^2(\Omega))} \le C\Vert F\Vert_{L^2(0,T;L^2(\Omega))}.
                                                 \eqno{(4.9)}
$$
Moreover, Theorem \ref{t31} and the inequality (4.9) imply the following norm 
estimate
\begin{align*}
& \Vert u\Vert_{H^{\alpha}(0,T;L^2(\Omega))}
\le C\Vert \ppp_t^{\alpha}u\Vert_{L^2(0,T;L^2(\Omega))}
= C\Vert -Lu+F\Vert_{L^2(0,T;L^2(\Omega))}\\
\le & C\Vert F\Vert_{L^2(0,T;L^2(\Omega))}.
\end{align*}
Thus the proof of Theorem \ref{t41} is completed.
\end{proof}

In this part of the section,  we consider the problem (4.1) with  
a more general elliptic operator $L$  and without the restriction 
$c(x)\le 0,\ x\in  \overline{\Omega}$. 
More precisely, in place of (4.1)  we consider the problem
$$
\left\{
\begin{array}{rl}
&\ppp_t^{\alpha}u(x,t) = -Lu(x,t) +  F(x,t), \quad x\in \Omega, 
\quad 0 < t < T,\\
&u(x,0) = 0, \quad x \in \Omega,\\
&u\vert_{\ppp\Omega\times (0,T)} = 0,
\end{array} \right.  \eqno{(4.10)}
$$
where we set 
$$
Lu(x,t) = -\sum_{j,k=1}^n \frac{\partial }{\partial x_j}\left( 
a_{jk}(x)\frac{\partial }{\partial x_k} u(x,t)\right) - \sum_{j=1}^n 
b_{j}(x)\frac{\partial }{\partial x_j} u(x,t) - c(x)u(x,t),
$$
and we assume the condition (4.2) to be fulfilled, $b_j \in L^{\infty}
(\Omega)$, $1\le j\le n$ and $c \in L^{\infty}(\Omega)$.  
Note that the condition
$c(x)\le 0,\ x \in \overline{\Omega}$ is not assumed any more.

The weak solution to the problem (4.10) is defined in the exactly same way as the one to the equation (4.1). Our main result regarding the problem (4.10) is as follows:

%\vspace{0.1cm}

%\noindent
%{\bf Theorem 4.2.}\\
\begin{thm}
\label{t42}
Let $F \in L^2(0,T;L^2(\Omega))$ and $0 < \alpha < 1$.
Then there exists a unique weak solution to the problem (4.10).
Moreover, there exists a constant $C>0$ such that the norm estimate 
(4.4) holds true for all $F \in L^2(0,T;L^2(\Omega))$.
\end{thm}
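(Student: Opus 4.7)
The plan is to treat (4.10) as a lower-order perturbation of (4.1) and reduce existence, uniqueness, and the bound (4.4) to a Fredholm alternative based on Theorem \ref{t41}. First I would split $L = L_0 + L_1$ with
\begin{equation*}
L_0 u = -\sum_{j,k=1}^n \frac{\ppp}{\ppp x_j}\left( a_{jk}(x)\frac{\ppp u}{\ppp x_k}\right), \qquad L_1 u = -\sum_{j=1}^n b_j(x)\frac{\ppp u}{\ppp x_j} - c(x)u.
\end{equation*}
Under (4.2) the principal part $L_0$ is symmetric, positive-definite on $H^2(\Omega)\cap H^1_0(\Omega)$ and satisfies the hypotheses of Theorem \ref{t41} (with its $c$ equal to $0$), so Theorem \ref{t41} provides a bounded solution operator $\mathcal{S}_0 : L^2(0,T;L^2(\Omega)) \to X$, where $X := L^2(0,T;H^2(\Omega)\cap H^1_0(\Omega)) \cap H^{\alpha}(0,T;L^2(\Omega))$. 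Since $b_j,c \in L^{\infty}(\Omega)$ and $H^2(\Omega) \hookrightarrow H^1(\Omega)$, the perturbation $L_1 : X \to L^2(0,T;L^2(\Omega))$ is bounded, and searching for a weak solution $u \in X$ of (4.10) becomes the operator equation $(I+K)u = \mathcal{S}_0 F$ on $X$ with $K := \mathcal{S}_0 L_1$.

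The next step is to show that $K$ is compact on $X$. I would factor $L_1$ through the embedding $X \hookrightarrow L^2(0,T;H^1(\Omega))$, which is compact by an Aubin--Lions--Simon type compactness lemma adapted to fractional time derivatives: elements of $X$ lie in $L^2(0,T;H^2(\Omega))$, the Rellich inclusion $H^2(\Omega) \hookrightarrow H^1(\Omega)$ is compact, and the uniform $H^\alpha(0,T;L^2(\Omega))$-bound available on $X$ yields the equicontinuity in time. Post-composing with the bounded operator $\mathcal{S}_0$ preserves compactness, so $K:X\to X$ is compact. The Fredholm alternative then reduces existence and the estimate (4.4) to injectivity of $I+K$; indeed, once injectivity is known, $u = (I+K)^{-1}\mathcal{S}_0 F$ and
\begin{equation*}
\Vert u\Vert_X \le \Vert (I+K)^{-1}\Vert\,\Vert \mathcal{S}_0\Vert\,\Vert F\Vert_{L^2(0,T;L^2(\Omega))}
\end{equation*}
delivers (4.4).

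For injectivity, take $w \in X$ with $\ppp_t^\alpha w + Lw = 0$ in $L^2(0,T;L^2(\Omega))$ and pair the equation with $w(\cdot,t)$ in $L^2(\Omega)$. Using the Alikhanov-type inequality $(\ppp_t^\alpha w,w)_{L^2(\Omega)} \ge \tfrac{1}{2}\ppp_t^\alpha \Vert w(\cdot,t)\Vert_{L^2(\Omega)}^2$ (first on the dense subspace $W$ and then extended to $\mathcal{R}(J^\alpha)$ via Theorem \ref{t31}), the coercivity $(L_0 w,w)_{L^2(\Omega)} \ge \nu_0 \Vert \nabla w\Vert_{L^2(\Omega)}^2$, and the absorption bound $|(L_1 w,w)_{L^2(\Omega)}| \le \tfrac{\nu_0}{2}\Vert \nabla w\Vert_{L^2(\Omega)}^2 + C\Vert w(\cdot,t)\Vert_{L^2(\Omega)}^2$, one obtains $\ppp_t^\alpha g(t) \le Cg(t)$ for $g(t):=\Vert w(\cdot,t)\Vert_{L^2(\Omega)}^2$. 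Since $w(x,\cdot)\in \mathcal{R}(J^\alpha)$ for a.e.\ $x$, the generalized Gronwall inequality invoked in the proof of Theorem \ref{t41} forces $g\equiv 0$, hence $w\equiv 0$.

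The main obstacle is the rigorous justification of the fractional chain-rule inequality $(\ppp_t^\alpha w,w)_{L^2(\Omega)} \ge \tfrac{1}{2}\ppp_t^\alpha\Vert w\Vert_{L^2(\Omega)}^2$ for functions whose time regularity is only $H^\alpha(0,T;L^2(\Omega))$ and which, when $0<\alpha\le \tfrac{1}{2}$, may not possess a pointwise initial trace. I would handle this by approximating $w(x,\cdot)$ (for a.e.\ $x$) by elements of the dense subspace $W$ of Section 3, where the inequality is classical, and pass to the limit using the norm equivalences of Theorems \ref{t21} and \ref{t31} to secure convergence of both sides in the relevant topologies of $\mathcal{R}(J^\alpha)$.
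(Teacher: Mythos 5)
Your overall architecture --- splitting off the lower-order terms, invoking Theorem \ref{t41} to get a bounded solution operator $\mathcal{S}_0$ for the principal part, and reducing (4.10) to the fixed-point equation $(I+K)u=\mathcal{S}_0F$ with $K=\mathcal{S}_0L_1$ --- is exactly the reduction the paper makes (its Duhamel formula (4.14), with $Q=-K$). Where you diverge is in how the fixed-point equation is solved: the paper shows directly that $Q^m$ is a contraction on $L^2(0,T;H^1(\Omega))$ for large $m$, using the smoothing estimate $\Vert L^{1/2}K(t)a\Vert_{L^2(\Omega)}\le Ct^{\alpha/2-1}\Vert a\Vert_{L^2(\Omega)}$ and the convolution identity for weakly singular kernels, which delivers existence, uniqueness and the bound (4.4) in one stroke with no compactness and no energy identity. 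Your Fredholm route needs two extra ingredients. The lesser one is the compactness of $K$, which rests on a compact embedding of $L^2(0,T;H^2(\Omega))\cap H^{\alpha}(0,T;L^2(\Omega))$ into $L^2(0,T;H^1(\Omega))$; such a fractional Aubin--Lions--Simon lemma is true (Simon's compactness theorem allows $W^{s,p}$ regularity in time), but it is an external result that must be cited or proved, and nothing in the paper supplies it.

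The serious gap is the injectivity step. The inequality $(\ppp_t^{\alpha}w,w)_{L^2(\Omega)}\ge\tfrac12\ppp_t^{\alpha}\Vert w(\cdot,t)\Vert^2_{L^2(\Omega)}$ is classical only for absolutely continuous $w$. In this paper $\ppp_t^{\alpha}$ is defined, via (3.3), only on $\mathcal{R}(J^{\alpha})\subset H^{\alpha}(0,T)$; for $0<\alpha\le\tfrac12$ the space $H^{\alpha}(0,T)$ is not an algebra and its elements need not be bounded, so there is no reason why $g(t)=\Vert w(\cdot,t)\Vert^2_{L^2(\Omega)}$ should lie in $\mathcal{R}(J^{\alpha})$ at all --- the right-hand side of your inequality may simply be undefined in this framework. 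Your proposed density argument does not repair this: $\va_n\to w$ in $H^{\alpha}(0,T)$ does not imply $\va_n^2\to w^2$ in $H^{\alpha}(0,T)$, so the limit of $\ppp_t^{\alpha}(\va_n^2)$ is not controlled. And even granting $\ppp_t^{\alpha}g\le Cg$ in some weak sense, the generalized Gronwall inequality invoked in the paper is for integral inequalities with weakly singular kernels, not for Caputo differential inequalities of such low regularity, so a separate comparison principle would still be needed. The fix is cheap and already in your hands: a homogeneous weak solution satisfies $w=-Kw$, hence $w=(-K)^mw$, and the kernel estimate $\Vert w(t)\Vert_{H^1(\Omega)}\le C\int_0^t(t-s)^{\alpha/2-1}\Vert w(s)\Vert_{H^1(\Omega)}\,ds$ plus the generalized Gronwall inequality for integral inequalities forces $w\equiv0$; this is precisely the paper's uniqueness proof and it closes your Fredholm alternative without any fractional chain rule.
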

\begin{proof}
For a function $a \in L^2(\Omega)$, let us define the operator 
$$
K(t)a = \sum_{k=1}^{\infty} t^{\alpha-1} E_{\alpha,\alpha}
(-\mu_kt^{\alpha})(a,\va_k)_{L^2(\Omega)}\va_k, \quad t > 0.
$$
Following the lines of \cite{Beck}, one can easily verify the norm estimate
$$
\Vert K(t)a\Vert_{\LLL} \le Ct^{\alpha-1}\Vert a\Vert_{\LLL}, \quad
a\in \LLL, \quad t>0.                               \eqno{(4.11)}
$$
For $0 \le \gamma \le 1$ one can define the fractional power 
$L^{\gamma}$ of the operator $L$ defined by (4.5).  Then according to \cite{F},
the inequalities 
$$
\left\{
\begin{array}{rl}
&\Vert v\Vert_{H^2(\Omega)} \le C\Vert Lv\Vert_{\LLL},
\quad v \in H^2(\Omega) \cap H^1_0(\Omega), \\
&C^{-1}\Vert L^{\frac{1}{2}}v\Vert_{\LLL} \le 
\Vert v\Vert_{H^1(\Omega)} \le C\Vert L^{\frac{1}{2}}v\Vert_{\LLL},
\quad v \in H^1_0(\Omega)
\end{array}\right.
\eqno{(4.12)}
$$
hold true.  
Moreover, for $0 \le \gamma \le 1$, we have
$L^{\gamma}K(t)a = K(t)L^{\gamma}a$ for $a \in \mathcal{D}(L^{\gamma})$,
and 
$$
\Vert L^{\gamma}K(t)a\Vert_{\LLL} \le Ct^{\alpha(1-\gamma)-1}
\Vert a\Vert_{\LLL}, \quad 0 \le \gamma \le 1, \quad t>0.         \eqno{(4.13)}
$$
Now we interpret the function $- \sum_{j=1}^n 
b_{j}(x)\frac{\partial }{\partial x_j} u(x,t) - c(x)u(x,t)$ as a 
non-homogeneous term in the equation (4.1) and apply 
Theorem \ref{t41}, so that we have a weak solution $u(t) := u(\cdot,t)$ of 
the problem (4.10) in the form 
$$
u(t) = -\int^t_0 K(t-s)\left( \sum_{j=1}^n b_j\ppp_ju(s) + cu(s)\right) ds
+  \int^t_0 K(t-s)F(s) ds, \quad t > 0.             \eqno{(4.14)}
$$
First we prove the uniqueness of the weak solution.  Let $F=0$ in (4.14).
Then, since $u(\cdot,t) \in H^2(\Omega) \cap H^1_0(\Omega)$ for 
$t >0$, by (4.12) and (4.13) we obtain 
\begin{align*}
& \Vert L^{\frac{1}{2}}u(t)\Vert_{\LLL}
\le C\left\Vert \int^t_0 L^{\frac{1}{2}}K(t-s)
\left( \sum_{j=1}^n b_j\ppp_ju(s) + cu(s)\right) ds \right\Vert_{\LLL}\\
\le &C \int^t_0 (t-s)^{\frac{1}{2}\alpha-1} 
\left(\left\Vert \sum_{j=1}^n b_j\ppp_ju(s)\right\Vert_{\LLL} 
+ \Vert cu(s)\Vert_{\LLL} \right) ds.
\end{align*}
Therefore (4.12) yields
$$
\Vert u(t)\Vert_{H^1(\Omega)} \le C\int^t_0 (t-s)^{\frac{1}{2}\alpha-1}
\Vert u(s)\Vert_{H^1(\Omega)} ds, \quad t > 0.
$$
The generalized Gronwall inequality (Theorem 7.2.1 in 
\cite{H}) yields $u(t) = 0$, $0 < t < T$ that completes  the proof of 
the uniqueness of the weak solution.

Next the existence of the weak solution is proved.  First an operator $Q$
from $L^2(0,T;H^2(\Omega))$ to itself is introduced by 
$$
Qu(t) = -\int^t_0 K(t-s)\left( \sum_{j=1}^n b_j\ppp_ju(s) + cu(s)\right) ds,
\quad 0 < t < T.
$$
Taking into consideration  Theorem \ref{t41}, it is sufficient to prove that the 
equation $u = Qu + G(t)$ has a unique solution in $L^2(0,T;H^1_0(\Omega))$.
Here we set $G(t) = \int^t_0 K(t-s)F(s) ds$ and Theorem \ref{t41} yields
$$
\Vert G\Vert_{L^2(0,T;H^2(\Omega))} 
+ \Vert G\Vert_{H^{\alpha}(0,T;L^2(\Omega))} 
\le C\Vert F\Vert_{L^2(0,T;\LLL)}.           \eqno{(4.15)}
$$
The estimates (4.12) and (4.13) lead to the inequality 
$$
\Vert L^{\frac{1}{2}}Qu(t)\Vert_{\LLL}
= \left\Vert -\int^t_0 L^{\frac{1}{2}}K(t-s)
\left( \sum_{j=1}^n b_j\ppp_ju(s) + cu(s)\right) ds\right\Vert_{\LLL}
$$
$$
\le C\int^t_0 (t-s)^{\frac{1}{2}\alpha-1}\Vert L^{\frac{1}{2}}u(s)
\Vert_{\LLL} ds,
\quad 0 < t < T.                       \eqno{(4.16)}
$$
Applying (4.16), we obtain the following chain of the inequalities:  
\begin{align*}
& \Vert L^{\frac{1}{2}}Q^2u(t)\Vert_{\LLL} 
= \Vert L^{\frac{1}{2}}Q(Qu(t))\Vert_{\LLL}\\
\le& C\int^t_0 (t-s)^{\frac{1}{2}\alpha-1}\Vert L^{\frac{1}{2}}(Qu(s))
\Vert_{\LLL} ds
\le C^2\int^t_0 (t-s)^{\frac{1}{2}\alpha-1}
\left( \int^s_0 (s-\xi)^{\frac{1}{2}\alpha-1}
\Vert L^{\frac{1}{2}}u(\xi)\Vert_{\LLL}d\xi
\right) ds\\
=& C^2\int^t_0 \left( \int^t_{\xi} (t-s)^{\frac{1}{2}\alpha-1}
(s-\xi)^{\frac{1}{2}\alpha-1} ds \right) 
\Vert L^{\frac{1}{2}}u(\xi)\Vert_{\LLL}d\xi\\
=& \frac{\left( C\Gamma\left(\frac{1}{2}\alpha\right)\right)^2}
{\Gamma(\alpha)}
\int^t_0 (t-\xi)^{\alpha-1} \Vert L^{\frac{1}{2}}u(\xi)\Vert_{\LLL}d\xi.
\end{align*}
Repeating the last estimation $m$-times, we obtain the inequality
$$
\Vert L^{\frac{1}{2}}Q^mu(t)\Vert_{\LLL}
\le \frac{\left( C\Gamma\left(\frac{1}{2}\alpha\right)\right)^m}
{\Gamma\left(\frac{1}{2}m\alpha\right)}
\int^t_0 (t-s)^{\frac{m}{2}\alpha-1} \Vert L^{\frac{1}{2}}u(s)\Vert_{\LLL}ds,
\quad 0<t<T, \thinspace m \in \N.
$$
Now we choose $m\in \N$ such that $\frac{m}{2}\alpha - 1> 0$ and set 
$C_m = \frac{\left( C\Gamma\left(\frac{1}{2}\alpha\right)\right)^m}
{\Gamma\left(\frac{1}{2}m\alpha\right)}$.
Then
$$
\Vert Q^mu(t)\Vert_{H^1(\Omega)} 
\le C_m\int^t_0 \max_{0\le t\le t} (t-s)^{\frac{m}{2}\alpha-1}
\Vert u(s)\Vert_{H^1(\Omega)} ds
\le T^{\frac{m}{2}\alpha-1}C_m\int^t_0 \Vert u(s)\Vert_{H^1(\Omega)} ds.
$$
Hence, setting $\rho_m = T^{\frac{m}{2}\alpha-1}C_m$, we arrive at the estimate
$$
\Vert Q^mu(t)\Vert^2_{H^1(\Omega)} 
\le \rho_m^2\left(\int^t_0 \Vert u(s)\Vert_{H^1(\Omega)} ds\right)^2
\le \rho_m^2T^2\int^T_0 \Vert u(s)\Vert^2_{H^1(\Omega)} ds,
$$
which implies the inequality
$$
\int^T_0 \Vert Q^mu(t)\Vert^2_{H^1(\Omega)} dt 
\le \rho_m^2T^2\int^T_0 \Vert u(s)\Vert^2_{H^1(\Omega)} ds.
$$
By the asymptotic behavior of the gamma function, it is easy to verify that 
$$
\lim_{m\to\infty} \rho_m 
= T^{-1}\lim_{m\to\infty}  \frac{\left( T^{\frac{\alpha}{2}}
C\Gamma\left(\frac{1}{2}\alpha\right)\right)^m}
{\Gamma\left(\frac{1}{2}m\alpha\right)} = 0.           \eqno{(4.17)}
$$
Hence $T\rho_m < 1$ for large $m\in \N$.
Now we set $\widetilde{Q}u = Qu + G$ and $w=u-v$. Then
$Qw = \widetilde{Q}w$ and $Q^mw = \widetilde{Q}^mw$, and it follows
from (4.17) that 
$\widetilde{Q}^m$ is a contraction from $L^2(0,T;H^1(\Omega))$ to 
itself.  Hence the mapping $\widetilde{Q}^m$ has a unique fixed point 
$u_*\in L^2(0,T;H^1(\Omega))$, that is, $\widetilde{Q}^mu_* = u_*$.  
Because $\widetilde{Q}^m(\widetilde{Q}u_*) = \widetilde{Q}u_*$, the point 
$\widetilde{Q}u_*$ is also a fixed point of the mapping $\widetilde{Q}^m$.  
By the uniqueness of the fixed point 
of $\widetilde{Q}^m$, we finally see the equality 
$u_* = \widetilde{Q}u_* = Qu_* + G$.
Thus the equation $u=Qu+G$ has a unique solution in $L^2(0,T;H^1_0(\Omega))$ and
$\Vert u\Vert_{L^2(0,T;H^1(\Omega))} \le C\Vert G\Vert
_{L^2(0,T;H^1(\Omega))}$.

Moreover, (4.15) implies
$$
\Vert u\Vert_{L^2(0,T;H^1(\Omega))} \le C\Vert G\Vert_{L^2(0,T;
H^1(\Omega))} \le C\Vert F\Vert_{L^2(0,T;\LLL)}. 
$$
Therefore $\left\Vert \sum_{j=1}^n b_j\ppp_ju + cu\right\Vert
_{L^2(0,T;\LLL)} \le C\Vert F\Vert_{L^2(0,T;\LLL)}$ and so Theorem \ref{t41}
yields the estimate
\begin{align*}
& \left\Vert Q\left(\sum_{j=1}^n b_j\ppp_ju + cu\right)\right\Vert
_{L^2(0,T;H^2(\Omega)) \cap H^{\alpha}(0,T;\LLL)}\\
= &\left\Vert \int^t_0 K(t-s)\left(\sum_{j=1}^n b_j\ppp_ju(s) + cu(s)
\right)ds\right\Vert
_{L^2(0,T;H^2(\Omega)) \cap H^{\alpha}(0,T;\LLL)}\\
\le& C\left\Vert \sum_{j=1}^n b_j\ppp_ju + cu\right\Vert
_{L^2(0,T;\LLL)}
\le C\Vert F\Vert_{L^2(0,T;\LLL)},
\end{align*}
which proves (4.4) and so the proof of the theorem is completed.
\end{proof}

\end{document}